\PassOptionsToPackage{dvipsnames}{xcolor}
\PassOptionsToPackage{noabbrev}{cleveref}
\ifdefined\SIAMreview
  \documentclass[review,oneeqnum,onefignum,onetabnum]{siamonline250211}
\else
  \documentclass[oneeqnum,onefignum,onetabnum]{siamonline250211}
\fi

\usepackage{amssymb,mathtools}
\usepackage{booktabs,tabularx,array}
\usepackage{enumitem}
\usepackage{microtype}
\usepackage{tikz}
\usepackage[round,authoryear]{natbib}
\usepackage{xurl}
\ifdefined\SIAMreview
\fi
\hypersetup{colorlinks=true,linkcolor=MidnightBlue,citecolor=ForestGreen,
            urlcolor=MidnightBlue,
            pdftitle={Squaring Up by Selection: NP-Completeness at Three Simple Roots},
            pdfkeywords={polynomial systems, numerical algebraic geometry,
              homotopy continuation, matroid intersection, mixed volume,
              NP-completeness}}

\theoremstyle{plain}
\theoremheaderfont{\normalfont\sffamily}
\theorembodyfont{\normalfont}
\theoremseparator{.}
\theoremsymbol{}
\renewtheorem{definition}[theorem]{Definition}
\newsiamremark{example}{Example}
\newsiamremark{problem}{Problem}
\crefname{problem}{Problem}{Problems}
\Crefname{problem}{Problem}{Problems}
\newsiamremark{remark}{Remark}
\AddToHook{env/example/begin}{\crefalias{theorem}{example}}
\AddToHook{env/remark/begin}{\crefalias{theorem}{remark}}

\newcommand{\C}{\mathbb C}
\newcommand{\Q}{\mathbb Q}
\newcommand{\R}{\mathbb R}
\newcommand{\MV}{\operatorname{MV}}
\newcommand{\MVp}{\operatorname{MV}^{+}}
\newcommand{\SMV}{\operatorname{SMV}}
\newcommand{\supp}{\operatorname{supp}}
\newcommand{\conv}{\operatorname{conv}}
\newcommand{\rank}{\operatorname{rank}}
\newcommand{\iso}{\operatorname{Iso}}
\newcommand{\Bases}{\operatorname{Bases}}
\newcommand{\Vol}{\operatorname{Vol}}

\newcommand{\paperauthor}{Oren Bassik}
\newcommand{\paperaffiliation}{CUNY Graduate Center}
\newcommand{\paperemail}{obassik@gc.cuny.edu}
\newcommand{\paperorcid}{0009-0003-4354-127X}
\newcommand{\paperfunding}{}
\newcommand{\paperacknowledgments}{}

\title{Squaring Up by Selection: NP-Completeness at Three Simple Roots}
\ifx\paperfunding\empty\else
  \title{Squaring Up by Selection: NP-Completeness at Three Simple Roots%
    \thanks{\funding{\paperfunding}}}
\fi
\author{\paperauthor\thanks{\paperaffiliation%
  \ifx\paperemail\empty\else\ (\email{\paperemail})\fi.%
  \ifx\paperorcid\empty\else\ ORCID: \href{https://orcid.org/\paperorcid}{\paperorcid}.\fi}}
\headers{Squaring Up by Selection}{\paperauthor}
\hypersetup{pdfauthor={\paperauthor}}

\begin{document}
\maketitle

\begin{abstract}
To solve an overdetermined polynomial system numerically, one first
makes it square, usually by replacing the given equations with
as many random linear combinations as there are unknowns.  This is a
provably safe step, but it can substantially enlarge the supports.  The
alternative is to keep that many of the given equations themselves.
Selection preserves sparsity but risks geometry: a genuine solution can
cease to be an isolated point of the subsystem's zero set.  We show
that deciding whether a safe choice exists is NP-complete, already for
an explicit family of systems of degree three with radical ideal and
exactly three simple rational solutions.  For strong selection with the
nondegenerate rational solutions supplied explicitly, three is the exact
threshold when degrees are polynomially bounded: one or two solutions
reduce to matroid intersection, three already give NP-completeness.
Even without a degree bound, an arbitrarily long list never takes
the decision problem beyond NP.
On the hard family, five natural notions of a faithful subsystem
coincide, and every failing choice fails visibly: its zero set contains
an affine subspace through one of the three solutions.  A degree-four
variant shows that cost information does not help: every candidate that could
possibly succeed has mixed volume exactly three, and the problem is
NP-complete still.  The construction realizes Karp's three-dimensional
matching problem as the selection of a square subsystem from the given
equations.
\end{abstract}

\begin{keywords}
Overdetermined polynomial systems, numerical algebraic geometry,
homotopy continuation, NP-completeness, matroid intersection, mixed
volume.
\end{keywords}

\begin{MSCcodes}
Primary 14Q20, 68Q25; secondary 05B35, 52A39, 65H14.
\end{MSCcodes}

\section{Squaring up}\label{sec:intro}

An overdetermined polynomial system has more equations than unknowns
but may nevertheless have finitely many common zeros.  Standard
homotopy-continuation methods for computing isolated complex roots are
organized around square systems: $n$ equations in $n$ unknowns.  The
input instead supplies $m>n$, often because several exact constraints
describe the same underlying geometry.  Somehow $m$ equations must become $n$.

Here ``overdetermined'' is meant in the exact algebraic sense.  We
assume that the equations have common zeros and seek all of their
isolated complex solutions.  For noisy or inconsistent equations,
nonlinear least squares and Gauss--Newton methods provide a different
natural formulation \citep[Sec.~4]{HauensteinRegan2018}.  When the
minimum residual is zero, its global minimizers are the common zeros,
but least squares neither enumerates all such zeros nor by itself
supplies the square polynomial system used by the homotopy methods
considered here.  Our question is whether the given equations can be
reduced to a square subsystem while preserving their exact solution
geometry.

We prove that this question already becomes NP-complete for explicit
degree-three systems with radical ideal and exactly three simple
rational solutions.  For strong selection with the nondegenerate
solutions supplied as part of the input and polynomially bounded degrees,
this is the exact transition:
one or two solutions are handled by matroid intersection, while three
are enough for hardness.  Thus the obstruction appears before
singularity, high degree, or a large solution set can be blamed.

The standard approach to overdetermined systems is already described in the foundational papers of
numerical algebraic geometry
\citep{SommeseWampler1996,SommeseVerschelde2000}: randomization.
Replace $F=(f_1,\dots,f_m)$ by $n$ random linear
combinations $R\,F$ with $R\in\C^{n\times m}$.  For generic $R$, every
isolated solution of $F$ survives as an isolated solution of the
square system, nonsingular if it was nonsingular for $F$, and only
finitely many extraneous solutions appear
(\citealp[Sec.~13.5]{SommeseWampler2005};
\citealp[Thm.~7.2]{WamplerSommese2011}); these are discarded afterward by
evaluating the original equations.  The method is simple, provably safe, and built into the
standard software
\citep{BatesEtAl2013,BreidingTimme2018,Verschelde1999PHCpack}.

Randomization, however, will typically destroy sparsity.  A generic combined equation carries the
monomials of all $m$ originals, so their Newton polytopes are replaced
by a common hull.  The polytope-sensitive root count tracked by a
polyhedral solver (the mixed volume, discussed in \cref{sec:cost})
can consequently grow enormously.  Here is a particularly simple instance.

\begin{example}\label{ex:inflation}
The system
\[
 F=(x_1-1,\ \dots,\ x_n-1,\ x_1^d+\dots+x_n^d-n)
\]
with an integer $d\ge1$ has the single solution $(1,\dots,1)$.  Keep the first $n$ equations and
you have a square subsystem with the same solutions and even the same
ideal, since
$x_1^d+\dots+x_n^d-n=\sum_i(x_i^{d-1}+\dots+x_i+1)(x_i-1)$ is a
combination of the others; its mixed volume is $1$, and a polyhedral
solve tracks one
path.  Randomize instead, and each combined equation has Newton polytope
$\conv\{0,d\,e_1,\dots,d\,e_n\}$; the mixed volume is $d^{\,n}$, and the
solver tracks $d^{\,n}$ paths to find one point.
\end{example}

The sparsity-preserving alternative is to keep $n$ of the equations
\emph{themselves}.  Choose an index set $A$ of size $n$,
solve the square subsystem $F_A=(f_i)_{i\in A}$, and discard the extra
solutions, as before, by evaluating the $m-n$ equations that were
dropped. 
 Practitioners do exactly this when they can
\citep{Duff2021,HauensteinRegan2018}, and the certification literature
routinely works through square subsystems
\citep{DuffHeinSottile2022}.

A concrete example comes from parameter estimation in ODE models.  The
differential-algebra approach differentiates the model's output
equations repeatedly and substitutes derivative values estimated from
the measured data, leaving a polynomial system in the unknown
parameters and states; its solutions are the parameter and state
values consistent with the observations, and identifiability analysis
predicts how many there are \citep{LjungGlad1994,HongEtAl2020}.
Prolongation supplies more equations than unknowns, and the estimation
software selects a square subsystem greedily, by Jacobian rank, solves
it by homotopy continuation, and filters the extra solutions afterward
\citep{BassikEtAl2026}.  Randomization has a second cost here, beyond
support inflation.  The equations carry derivative estimates of
unequal accuracy, and a combination spreads the worst estimate into
every equation, while a selection can prefer the reliable low-order
ones.

The same selection appears in the five-point relative-pose problem of
computer vision, where a depth formulation has ten polynomial
equations in nine unknowns after dehomogenization, and the homotopy solver
keeps nine
\citep[Sec.~7.1]{HrubyEtAl2022}.  The subproblem recurs at every
iteration of a RANSAC loop, so support size and path count are paid at
every solve.  In both
applications, which equations to keep is decided on the exact
polynomial system, independently of the statistical question of
handling noise in the observations.

Because the selected equations are unchanged, no solution is lost as a
point: every zero of $F$ is a zero of $F_A$.  The question is which $A$
to take.  Two examples show the principal ways a choice can fail.

\begin{example}\label{ex:cost}
Suppose we choose greedily, by cost.  Consider
\[
 f_1=(x-1)(y-1),\qquad
 f_2=(x-1)(y-2),\qquad
 f_3=(y-3)(y-4)+(x-1)^2,
\]
three conics in two unknowns, with exactly two solutions, $(1,3)$ and
$(1,4)$, both nonsingular.  The three candidate pairs have mixed volumes
$2$, $4$, and $4$, so the greedy chooser takes the cheap pair
$\{f_1,f_2\}$, but that choice fails geometrically.
The two equations share the factor $x-1$: their common zero set is the
entire line $x=1$; and both solutions lie on that line.  Neither is an
isolated point of the subsystem's zero set, and a numerical solve of the
pair is under no obligation to find either one.

Note what did \emph{not} give the failure away.  By Bernstein's theorem
\citep{Bernshtein1975}, a square system can have at most as many
isolated solutions in $(\C^\times)^2$ as its mixed volume; the cheap
pair has mixed volume $2$, exactly the number of solutions it needs to
hold.  Its capacity is right; only its geometry is wrong.  The two pairs
that work cost $4$  and their extra solutions are
non-real points that the dropped equation filters out exactly as
intended.  Replacing the exponent $2$ by $d$ drives the ratio to
$(d+2)/2$: the cheap choice stays wrong, and the right choices get
arbitrarily more expensive.  Thus minimizing cost can select precisely
the subsystem that fails.
\end{example}

\begin{example}\label{ex:ws}
Perhaps, then, one should forget cost and simply search for a subsystem
that keeps every solution isolated.  Now comes the second surprise, an
example published by \citet[eq.~(70)]{WamplerSommese2011}.  With
$a=x-1$, $b=y-1$, $c=x+y-3$, take
\[
 F=(ab,\ ac,\ bc):
\]
three equations, two unknowns, and the three solutions
$(1,1)$, $(1,2)$, $(2,1)$, each nonsingular for the full system.  There
are three ways to choose a square subsystem, and all three fail.  The
pair $(ab,ac)=a\cdot(b,c)$ vanishes on the whole line $a=0$, which
passes through $(1,1)$ and $(1,2)$; the other two pairs fail the same
way with the roles of the lines exchanged (\cref{fig:ws}).  No selection
keeps the roots isolated, although every root is nondegenerate.
Randomization, by contrast, handles the example directly:
equation (71) of the same paper shows that two generic combinations
recover all three solutions at the price of one discardable extra.
Selection and randomization genuinely differ: a selection preserving
isolation need not exist even when every solution is nonsingular.
\end{example}

\begin{figure}[t]
\centering
\begin{tikzpicture}[scale=1.15,line width=1pt]
  \draw[->,gray] (-0.5,0) -- (3.5,0) node[below right] {$x$};
  \draw[->,gray] (0,-0.5) -- (0,3.5) node[above left] {$y$};
  \draw (1,-0.35) -- (1,3.3);
  \node[right] at (1.02,3.12) {$a=0$};
  \draw (-0.35,1) -- (3.3,1);
  \node[above] at (0.45,1.02) {$b=0$};
  \draw (-0.25,3.25) -- (3.25,-0.25);
  \node[right] at (2.42,0.72) {$c=0$};
  \fill (1,1) circle (2.2pt);
  \node[below left] at (1,1) {$(1,1)$};
  \fill (1,2) circle (2.2pt);
  \node[below left] at (1,2) {$(1,2)$};
  \fill (2,1) circle (2.2pt);
  \node[above right] at (2.05,1.05) {$(2,1)$};
\end{tikzpicture}
\caption{The example of \citet{WamplerSommese2011}.  Each equation of
$F=(ab,ac,bc)$ vanishes on two of the three lines, and the three
solutions (dots) are the pairwise intersections of the lines.  Any two
equations share a line, and each line carries two of the three
solutions: whichever pair one selects, two roots are handed to a
positive-dimensional component and cease to be isolated.}
\label{fig:ws}
\end{figure}

The examples lead to the question of this paper: given the equations,
can we decide whether some square subsystem keeps every solution
isolated?

Our main result, \cref{thm:main}, proves this decision problem
NP-complete on a recognizable family of systems of degree at most
three, with radical ideal and exactly \emph{three simple rational
solutions} known in advance.  In plain terms, a proposed selection is
quickly checked on this family, but a polynomial-time algorithm deciding
whether one exists would imply $\mathrm P=\mathrm{NP}$.  For strong
selection with explicitly listed nondegenerate solutions, three is the
exact threshold: one or two
solutions reduce to matroid intersection (\cref{sec:validity}), while
even without a degree bound an arbitrarily long list never takes the
problem beyond NP (\cref{sec:sharpness}).  The example of
\cref{ex:ws} is the smallest geometric representative of the transition.

The main theorem also closes both escape routes the examples above
might suggest.  On the hard family there is no gap between the many
things ``valid'' might mean: keeping solutions isolated, Jacobian
nonsingularity, finiteness of the subsystem's zero set, equality of
solution sets, equality of ideals --- all five coincide, for every
selection, and every failing selection fails for the visible reason of
\cref{ex:ws}, by containing an explicit affine subspace through one of
the three solutions (\cref{sec:main}).  The hardness is a fact about
geometry, not about any particular algebraic test.  And cost
information cannot rescue the greedy chooser: on a degree-four variant
of the family, every candidate that could possibly work has mixed
volume exactly $3$, and the decision problem is NP-complete still
(\cref{sec:cost}).

\begin{table}[t]
\small\centering
\begin{tabularx}{\textwidth}{@{}
  >{\raggedright\arraybackslash}X
  >{\raggedright\arraybackslash}X
  >{\raggedright\arraybackslash}p{0.16\textwidth}
  >{\raggedright\arraybackslash}p{0.19\textwidth}@{}}
\toprule
How the solutions are given & Task & Answer & Where\\
\midrule
one or two listed nondegenerate points & strong selection & polynomial time &
  \cref{rem:smallk}\\
three listed points & selection on the hard family & NP-complete &
  \cref{thm:main,thm:constantmv}\\
a nondegenerate list, any length & strong selection & in NP &
  \cref{prop:ceiling}\\
one point, one square system & decide isolation & open &
  \cref{prob:pointisolation}\\
\bottomrule
\end{tabularx}
\caption{The complexity map for explicitly listed solutions.  This
paper proves the first three rows; the first assumes polynomially bounded
degrees.  The final row is the local
isolation problem underlying weak validity and remains open; see
\cref{sec:open}.}
\label{tab:map}
\end{table}

\Cref{tab:map} shows where the results sit.  The combinatorial core
inside the main theorem is a problem that has been hard since the
beginning of complexity theory: three-dimensional matching, one of
Karp's original NP-complete problems \citep{Karp1972,GareyJohnson1979}.
In matroid language this is the common-basis problem for three
partition matroids (with two matroids being the polynomially solvable
matroid intersection problem \citep{Edmonds1979,Lawler1975}).  What the
present paper contributes is the transfer.  The construction realizes
this combinatorial obstruction by polynomial equations kept exactly as
supplied, in degree three, with full control of the ideal, the
solutions, and the failure geometry.  The hardness therefore lands on
precisely the decision the numerical analyst faces, and the natural
escape routes are all closed: singularity, degeneracy, cost signals, or a different
notion of correctness.

The paper is organized as the examples suggest.  \Cref{sec:validity}
says precisely what a selection must preserve; \cref{sec:construction} gives
the general construction behind examples like \cref{ex:ws};
\cref{sec:main} proves the main theorem; \cref{sec:sharpness} shows that
three is the exact threshold for strong selection with explicitly
listed nondegenerate solutions, and proves
the NP ceiling;
\cref{sec:cost} proves that cost cannot help; \cref{sec:open}
surveys the wider map and collects open problems.

\section{What a selection must preserve}\label{sec:validity}

Let us fix notation.  Throughout,
$F=(f_1,\dots,f_m)\in\Q[x_1,\dots,x_n]^m$ with $m>n$ is an
overdetermined system whose complex solution set
$S=V_{\C}(F)\subseteq\C^n$ is finite and nonempty.  A \emph{selection}
is an index set $A\subseteq\{1,\dots,m\}$ with $|A|=n$; it determines
the square subsystem $F_A=(f_i)_{i\in A}$ with zero set
$Z_A=V_{\C}(F_A)$.  Since the selected equations are unchanged,
$S\subseteq Z_A$ always: selection can add solutions, never remove one.
What it can do instead --- the examples of \cref{sec:intro} show --- is
worse: it can absorb a genuine solution into a positive-dimensional
component of $Z_A$, where a numerical solve has no duty to present it as
an isolated endpoint.

There are several inequivalent ways to demand that this not happen.
Five will matter.

\begin{definition}\label{def:predicates}
Write $\iso(Z_A)$ for the set of isolated points of $Z_A$.  The
selection $A$ is
\begin{enumerate}[label=(\roman*),itemsep=1pt]
\item \emph{weakly valid} if $S\subseteq\iso(Z_A)$ --- every genuine
      solution stays isolated;
\item \emph{strongly valid} if $\det DF_A(p)\neq0$ for every $p\in S$
      --- every genuine solution is a nonsingular zero of the
      subsystem;
\item \emph{globally finite} if $\dim Z_A=0$ --- the subsystem has
      finitely many zeros altogether;
\item \emph{set faithful} if $Z_A=S$ --- no extra zeros at all;
\item \emph{scheme faithful} if $(F_A)=(F)$ as ideals of
      $\Q[x_1,\dots,x_n]$ --- the subsystem carries the same algebra,
      not just the same zeros.
\end{enumerate}
\end{definition}

Only the following implications hold in general:
\[
 \text{strongly valid}\;\Longrightarrow\;\text{weakly valid},
\]
\[
 \text{scheme faithful}\;\Longrightarrow\;\text{set faithful}
 \;\Longrightarrow\;\text{globally finite}
 \;\Longrightarrow\;\text{weakly valid},
\]
the middle implication because $S$ is finite, and the last because
$S\subseteq Z_A$ and every point of a zero-dimensional
set is isolated.  None of the implications reverses without further
hypotheses, and one small system separates most of them at once: for
$F=(x^2-y,\ y,\ x)$, with the single nondegenerate solution at the
origin, the selection $\{1,2\}$ gives the ideal $(x^2-y,\,y)=(x^2,y)$
--- the origin is isolated with multiplicity two, so the selection is
weakly valid, globally finite, and set faithful, but neither strongly
valid (the two gradients are parallel there) nor scheme faithful
($(x^2,y)\neq(x,y)$).  The selection $\{2,3\}=(y,x)$ is valid in every
sense.

Weak validity is the correctness requirement of the solve-then-filter
workflow.  Strong validity is what one wants on top of correctness:
nonsingular endpoints are exactly the ones that Newton's method polishes
quadratically and that certification tools accept
\citep{HauensteinSottile2012,DuffHeinSottile2022}.  In general the two
notions differ; a pleasant feature of every hard family in this paper is
that on those families they will not.

The local meaning of weak validity connects the selection problem to
commutative algebra. 

\begin{proposition}\label{prop:bridge}
Let $p\in S$, let $\mathcal O_p$ be the local ring of $\C^n$ at $p$ and
$\mathfrak m_p$ its maximal ideal.  Then $p$ is isolated in $Z_A$ if and
only if $(F_A)\mathcal O_p$ is $\mathfrak m_p$-primary, that is, if and
only if the $n$ selected germs form a system of parameters of
$\mathcal O_p$.  Moreover $\det DF_A(p)\neq0$ if and only if the images
of the selected germs form a basis of
$\mathfrak m_p/\mathfrak m_p^2$.
\end{proposition}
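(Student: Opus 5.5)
The plan is to translate isolation into a statement about the local ring via the Nullstellensatz, and to handle the Jacobian claim by a direct computation in $\mathfrak m_p/\mathfrak m_p^2$. Take $\mathcal O_p$ to be the localization $\C[x]_{\mathfrak m_p}$; the analytic local ring would give the same conclusions by faithful flatness. Write $I=(F_A)\mathcal O_p$.

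For the first equivalence, the zero set of $I$ near $p$ corresponds to the minimal primes of $I$. These are exactly the primes $P\subseteq\mathfrak m_p$ of $\C[x]$ that are minimal over $(F_A)$, and each one is the ideal of an irreducible component of $Z_A$ passing through $p$.

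Suppose $p$ is isolated in $Z_A$. Then the only irreducible component of $Z_A$ through $p$ is $\{p\}$. Hence the only minimal prime of $I$ is $\mathfrak m_p\mathcal O_p$, so $\sqrt I=\mathfrak m_p\mathcal O_p$, which says $I$ is $\mathfrak m_p$-primary.

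Conversely, suppose $I$ is $\mathfrak m_p$-primary. Then any component $W$ of $Z_A$ through $p$ has prime ideal $P\subseteq\mathfrak m_p$ minimal over $(F_A)$. This forces $P\mathcal O_p=\mathfrak m_p\mathcal O_p$, and hence $W=\{p\}$. Since $Z_A$ has finitely many components, $p$ is isolated.

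The "system of parameters" reformulation is then the definition. $\dim\mathcal O_p=n$, and we have exactly $n$ elements, so they form a system of parameters precisely when they generate an $\mathfrak m_p$-primary ideal. Equivalently, by Krull, $\dim\mathcal O_p/I=0$.

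For the Jacobian statement, translate coordinates so that $p=0$. Then $\mathfrak m_p/\mathfrak m_p^2$ is the $n$-dimensional $\C$-space spanned by the classes of $x_1,\dots,x_n$. Each $f_i$ with $i\in A$ vanishes at $p$, so it lies in $\mathfrak m_p$. Its Taylor expansion shows that its class is $\sum_j \partial_j f_i(p)\,\bar x_j$. The images of the $n$ germs therefore form a basis exactly when the $n\times n$ matrix $DF_A(p)$ is invertible.

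I expect no genuine obstacle. The only delicate point is the first step: keeping straight the correspondence between components of $Z_A$ through $p$ and the minimal primes of the localization. That correspondence is standard localization of primary decomposition, which I would cite rather than reprove.
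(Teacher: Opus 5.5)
Your proof is correct and follows the paper's route. You translate isolation of $p$ into $(F_A)\mathcal O_p$ being $\mathfrak m_p$-primary, treat the system-of-parameters clause as definitional because $\dim\mathcal O_p=n$, and get the Jacobian clause from the fact that the class of $f_i$ in $\mathfrak m_p/\mathfrak m_p^2$ is $\sum_j\partial_jf_i(p)\,\bar x_j$; your correspondence between components of $Z_A$ through $p$ and minimal primes of the localized ideal just spells out the step the paper asserts in one line, namely that $p$ is isolated if and only if $\mathcal O_p/(F_A)\mathcal O_p$ has Krull dimension $0$.
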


\begin{proof}
The point $p$ is isolated in $Z_A$ iff
$\mathcal O_p/(F_A)\mathcal O_p$ has Krull dimension $0$, which for a
Noetherian local ring means $(F_A)\mathcal O_p\supseteq\mathfrak m_p^N$
for some $N$, that is, the ideal is $\mathfrak m_p$-primary.  And $n$
elements generating an
$\mathfrak m_p$-primary ideal of the regular $n$-dimensional local ring
$\mathcal O_p$ are, by definition, a system of parameters.  For the
second claim, $\det DF_A(p)\neq0$ says the differentials of the
selected germs span the $n$-dimensional cotangent space, and $n$
spanning vectors are a basis.
\end{proof}

So the selection problem asks for one supplied $n$-subset that is a
system of parameters simultaneously at every solution.  The word
\emph{simultaneously} is where the combinatorics enters.

Call $p\in S$ \emph{nondegenerate} if $\rank DF(p)=n$ --- this is what
``nonsingular for the full system'' meant in \cref{sec:intro}.  At a
nondegenerate solution the $m$ gradients
$\nabla f_1(p),\dots,\nabla f_m(p)$ span $\C^n$ and \emph{represent} a
matroid $M_p$ on the ground set $\{1,\dots,m\}$ of equation indices: a
subset is independent in $M_p$ when its gradients are linearly
independent, and the bases of $M_p$ --- its maximal independent sets
--- are exactly the $n$-subsets with linearly independent gradients
\citep[for matroids in general, see][]{Oxley2011}.

\begin{lemma}\label{lem:commonbasis}
Let $p\in S$ be nondegenerate and $|A|=n$.  Then $\det DF_A(p)\neq0$ if
and only if $A$ is a basis of $M_p$.  Consequently, if every point of
$S$ is nondegenerate,
\[
 A\ \text{strongly valid}
 \quad\Longleftrightarrow\quad
 A\in\bigcap_{p\in S}\Bases(M_p):
\]
strong selection is the common-basis problem for the $k=|S|$
matroids of the Jacobian.
\end{lemma}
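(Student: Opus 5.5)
The argument is essentially linear algebra, unwinding the definitions already in place.

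First, fix a nondegenerate $p\in S$ and an $n$-subset $A$. The matrix $DF_A(p)$ is the $n\times n$ matrix whose rows are the gradients $\nabla f_i(p)$ for $i\in A$. Its determinant is nonzero if and only if these $n$ rows are linearly independent in $\C^n$. By the definition of $M_p$, that holds exactly when $A$ is independent in $M_p$.

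Next, since the full gradient family spans $\C^n$ (nondegeneracy), $M_p$ has rank $n$. Hence an independent set of size $n$ is maximal, that is, a basis. Conversely, every basis has size $n$ and is independent. This gives the first equivalence. The only point needing care is exactly this use of nondegeneracy: without it, $M_p$ would have rank below $n$, no $n$-subset would be independent, and the bases would not be $n$-subsets.

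Finally, for the displayed equivalence I take intersections over $p\in S$. By \cref{def:predicates}(ii), $A$ is strongly valid iff $\det DF_A(p)\neq0$ for every $p\in S$. When every point of $S$ is nondegenerate, the first part makes this equivalent to $A\in\Bases(M_p)$ for every $p$, i.e.\ $A\in\bigcap_{p\in S}\Bases(M_p)$.

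The closing sentence is then just a rephrasing. Strong selection asks for a common basis of the $k=|S|$ matroids $M_p$, all on the common ground set $\{1,\dots,m\}$. No step is a real obstacle; the only subtlety is the rank-$n$ observation above, which is what ties the fixed size $|A|=n$ to basishood.
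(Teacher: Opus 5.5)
Your proposal is correct and matches the paper's proof: the bases of $M_p$ are exactly the $n$-subsets with linearly independent gradients, i.e.\ those with $\det DF_A(p)\neq0$, and the displayed equivalence is \cref{def:predicates}(ii) applied at every solution at once. The only difference is that you spell out the rank-$n$ step, which the paper builds into its definition of $M_p$.
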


\begin{proof}
By construction the bases of $M_p$ are the $n$-subsets whose gradients
are linearly independent, that is, those with $\det DF_A(p)\neq0$; the
second claim invokes \cref{def:predicates}(ii) at every solution at
once.
\end{proof}

This observation is definitional; we use
it as a dictionary, not as a result.  What the dictionary buys is the
well developed theory of common bases.  With one matroid, finding a basis is
easy.  With two, common-basis feasibility is the matroid intersection
problem, solved in polynomial time by \citet{Edmonds1979}; see also
\citet{Lawler1975}.  And the two-matroid case genuinely occurs: for
\[
 F=(y,\ \ x^2(x-1),\ \ x(x-1)^2),
 \qquad S=\{(0,0),\,(1,0)\},
\]
both solutions are nondegenerate and the two Jacobian matroids have
\emph{disjoint} basis families ($\{1,3\}$ at the first solution,
$\{1,2\}$ at the second), so no strongly valid selection exists ---
and matroid intersection detects this in polynomial time.

\begin{remark}\label{rem:smallk}
Suppose every solution is nondegenerate, the solutions are given as an
explicit list, and the equations have polynomially bounded degrees, so
that the gradients $\nabla f_i(p)$ can be written down with polynomially
many bits.  Then for $k=1$ strong-selection feasibility is a
matroid basis computation, and for $k=2$ it is matroid
intersection: polynomial time in both cases.  (The degree hypothesis
matters: for sparse input with binary-encoded exponents, even
evaluating a gradient exactly meets the obstruction of
\cref{rem:modular} below.)  For this listed strong-selection problem,
the transition begins at $k=3$.
\end{remark}

A final word on input, since complexity claims depend on how the input
arrives.  When we claim \emph{hardness}, we prove it for an explicit
family of systems recognizable in polynomial time, so the hardness
statement applies verbatim to arbitrary sparse rational input.  When we
claim \emph{membership} --- ``the problem is in NP'' --- we say exactly
which input model we mean: either that recognizable family, whose
syntactic form proves the standing hypotheses (finitely many solutions,
nondegeneracy, the count), or input that carries a list promised to
contain exactly the solutions.  For arbitrary sparse input we assert no upper bounds at
all; verifying the standing hypotheses is then itself a nontrivial
computational problem \citep[cf.][]{Koiran1996}, and whether
unrestricted weak selection even lies in NP is open
(\cref{sec:open}).

We now formalize the decision problems and their input models.

\begin{definition}\label{def:problems}
An instance of \textsc{Strong Selection} (respectively \textsc{Weak
Selection}) consists of an overdetermined system
$F\in\Q[x_1,\dots,x_n]^m$, $m>n$, encoded sparsely by rational
coefficients and binary integer exponents.  The promise is that
$S=V_{\C}(F)$ is finite and nonempty; the question is whether some
$n$-element selection $A$ makes $F_A$ strongly (respectively weakly)
valid with respect to $S$.

In \textsc{Listed Strong Selection} and \textsc{Listed Weak
Selection}, the input additionally contains a list of rational points
promised to be exactly $S$.  The listed strong variant further promises
that every point of $S$ is nondegenerate for the full system.  The
upper bounds and threshold statements for listed solutions refer to
these variants.  On each recognizable hard family constructed below,
the syntax itself establishes the promises, so restricting either
problem to that family gives an ordinary language rather than a
promise problem.
\end{definition}


\section{The slice construction}\label{sec:construction}

The failure in \cref{ex:ws} has a shape.  Three lines in the plane;
each equation vanishing on two of them; each pair of equations sharing
one; each shared line carrying two solutions.  This section gives a
construction that produces such geometry in any dimension, for any
number of solutions, from arbitrary prescribed linear algebra.  The
construction is the whole technical content of the paper.  The theorems
of \cref{sec:main,sec:sharpness,sec:cost} are three specializations of
it.

Here are the inputs.  Fix integers $q\ge1$, $k\ge1$, and
$\ell>q$; distinct rational numbers $\alpha_1,\dots,\alpha_k$ (the
\emph{slice values}); a base point $b\in\Q^q$; and matrices
$M^{(1)},\dots,M^{(k)}\in\Q^{q\times\ell}$, each of full row rank
$q$, with columns $a^{(j)}_i$.  In $n=q+1$ variables
$(x,s)=(x_1,\dots,x_q,s)$, the construction outputs the $m=\ell+1$ equations
\begin{equation}\label{eq:slicerows}
 r(s)=\prod_{j=1}^{k}(s-\alpha_j),
 \qquad
 g_i(x,s)=\sum_{j=1}^{k}L_j(s)\,
   \bigl\langle a^{(j)}_i,\;x-b\bigr\rangle
 \quad(1\le i\le\ell),
\end{equation}
where $L_j$ is the Lagrange basis polynomial taking the value $1$ at
$\alpha_j$ and vanishing at the other slice values.  Write
$F_M=(r,\ g_1,\dots,g_\ell)$.  Every equation has degree at most $k$.  The equation $r$, which we shall call the
\emph{guard}, pins $s$ to the slice values; on the slice $s=\alpha_j$
the row $g_i$ collapses to the linear form
$\langle a^{(j)}_i,x-b\rangle$, so the $j$-th slice sees exactly the
matrix $M^{(j)}$.  One polynomial system; $k$ prescribed
linear-algebra problems.

\begin{proposition}\label{prop:slices}
With the notation above:
\begin{enumerate}[label=(\alph*),itemsep=2pt]
\item $(F_M)=(r(s),\,x_1-b_1,\dots,x_q-b_q)$, a radical ideal, and
\[
 S=V_{\C}(F_M)=\{p_j=(b,\alpha_j):\ j=1,\dots,k\}
\]
consists of $k$ simple rational solutions.
\item Every selection omitting the guard contains the line
      $\{(b,s):s\in\C\}\subseteq Z_A$, which passes through every
      solution; such a selection is not weakly valid.
\item Let $A=\{r\}\cup A'$ with $A'\subseteq\{1,\dots,\ell\}$,
      $|A'|=q$ (abusing notation, we identify equations with their
      indices).  At every solution $p_j$,
\[
 \det DF_A(p_j)\;=\;\pm\,r'(\alpha_j)\cdot\det M^{(j)}_{A'},
\]
      where $M^{(j)}_{A'}$ is the square submatrix on the columns
      $A'$, and the following are equivalent:
      \begin{enumerate}[label=(\roman*)]
      \item $A$ is strongly valid;
      \item $A$ is weakly valid;
      \item $Z_A$ is finite;
      \item $Z_A=S$;
      \item $(F_A)=(F_M)$;
      \item $A'$ is a common basis of the $k$ matroids
            represented by the columns of
            $M^{(1)},\dots,M^{(k)}$.
      \end{enumerate}
\item If $\rank M^{(j)}_{A'}<q$ for some $j$, then
\[
 \Bigl(b+\ker\bigl((M^{(j)}_{A'})^{\!\top}\bigr)\Bigr)\times\{\alpha_j\}
 \ \subseteq\ Z_A
\]
      is an explicit rational affine subspace of dimension
      $q-\rank M^{(j)}_{A'}\ge1$ through the solution $p_j$; we call it
      the \emph{failure subspace} of the pair $(A,j)$.
\end{enumerate}
\end{proposition}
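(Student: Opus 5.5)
The whole proof rests on one structural observation: modulo the guard $r(s)$, each row $g_i$ reduces slice by slice to the linear form $\langle a^{(j)}_i,x-b\rangle$. Since $\C[s]/(r)\cong\prod_j\C$ by the Chinese remainder theorem, with $L_j$ mapping to the $j$-th idempotent, there is an isomorphism
\[
\C[x,s]/(r)\;\cong\;\prod_{j=1}^k\C[x],\qquad g_i\mapsto\bigl(\langle a^{(j)}_i,x-b\rangle\bigr)_{j}.
\]
I will use this decomposition for parts (a), (c) and (d), and handle (b) directly.

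For (a), every $g_i$ lies in the ideal $(x_1-b_1,\dots,x_q-b_q)$, so $(F_M)\subseteq(r,x-b)$. Conversely, on the $j$-th factor the $\ell$ linear forms $\langle a^{(j)}_i,x-b\rangle$ span all the $x_t-b_t$, because $M^{(j)}$ has full row rank $q$. So in the product ring the image of $(F_M)$ contains every $x_t-b_t$, since these elements are assembled factorwise. Lifting back gives $x_t-b_t\in(F_M)+(r)=(F_M)$. I will also write this over $\Q$: choose rational $c^{(j)}$ with $\sum_i c^{(j)}_i a^{(j)}_i=e_t$ and set $h=\sum_i\bigl(\sum_j L_j(s)c^{(j)}_i\bigr)g_i$. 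Then $h\equiv x_t-b_t \pmod r$. The quotient $\Q[x,s]/(r,x-b)\cong\Q[s]/(r)$ is reduced because the $\alpha_j$ are distinct, so the ideal is radical and $S$ is exactly the listed set. Simplicity follows from reducedness, or from nondegeneracy: the Jacobian rows are $r'(\alpha_j)e_s$ together with the vectors $a^{(j)}_i$ in the $x$-directions.

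For (b), every $g_i$ vanishes identically at $x=b$, so the line $x=b$ lies in $Z_A$ and passes through every $p_j$. No point of $S$ is then isolated, so weak validity fails.

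For (c), at $p_j$ the row $\nabla r$ is $(0,\dots,0,r'(\alpha_j))$. Each row $\nabla g_i(p_j)$ equals $(a^{(j)}_i,\ast)$, since the derivatives of the $L_j$ multiply terms that vanish at $x=b$, and in fact $\ast=0$. Expanding the determinant along the $s$-column gives the formula. The key step is to show $Z_A=\bigcup_j \bigl(\{x:(M^{(j)}_{A'})^{\top}(x-b)=0\}\times\{\alpha_j\}\bigr)$, which follows from the ring decomposition, or simply by setting $s=\alpha_j$. This identity gives (d) at once, and it gives the implication (iii)$\Rightarrow$(vi): if $Z_A$ is finite, every slice kernel is trivial.

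For the equivalences I plan the cycle (vi)$\Rightarrow$(v)$\Rightarrow$(iv)$\Rightarrow$(iii)$\Rightarrow$(ii) and (vi)$\Leftrightarrow$(i), with (ii)$\Rightarrow$(vi) closing the loop. (vi)$\Leftrightarrow$(i) is the determinant formula combined with \cref{lem:commonbasis}, using $r'(\alpha_j)\neq0$. (vi)$\Rightarrow$(v) repeats the argument of (a) with $\ell$ replaced by $A'$: each square invertible $M^{(j)}_{A'}$ lets us recover $x-b$. The implications (v)$\Rightarrow$(iv)$\Rightarrow$(iii)$\Rightarrow$(ii) are the general ones noted after \cref{def:predicates}. (ii)$\Rightarrow$(vi) comes from (d): if some slice is rank deficient, the failure subspace has positive dimension and passes through $p_j$, so $p_j$ is not isolated.

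The main obstacle is making the ideal-membership lift in (v) explicit over $\Q$ rather than only after extending scalars. I would handle it with the Lagrange-weighted combination $h$ above, checking that $h-(x_t-b_t)$ vanishes on every slice and is therefore divisible by $r$, because it is a polynomial in $s$ over $\Q[x]$ vanishing at each $\alpha_j$.
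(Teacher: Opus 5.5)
Your proposal is correct and follows the paper's proof essentially step for step: the Chinese-remainder splitting of the quotient by the guard, and full row rank of each $M^{(j)}$ to recover $x-b$. It also uses Laplace expansion for the determinant factorization, the same argument restricted to $A'$ for (vi)$\Rightarrow$(v), and the slice-wise failure subspace to close the cycle of equivalences. The obstacle you flag is not really one, because the $\alpha_j$ are rational and the paper simply performs the splitting $\Q[x,s]/(r)\cong\bigoplus_j\Q[x]$ over $\Q$ from the start; your explicit Lagrange-weighted combination $h$ amounts to the same thing.
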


\begin{proof}
(a) Since the $\alpha_j$ are distinct, $r$ is squarefree, and the
Chinese Remainder Theorem splits the quotient by $(r)$ into $k$
independent copies of $\Q[x]$, one per slice:
\[
 \Phi:\ \Q[x,s]/(r)\ \xrightarrow{\ \sim\ }\
 \bigoplus_{j=1}^{k}\Q[x],
 \qquad
 h\longmapsto\bigl(h|_{s=\alpha_1},\dots,h|_{s=\alpha_k}\bigr).
\]
Under $\Phi$ the Lagrange polynomials become the idempotents of the
splitting, and
$\Phi(g_i)=\bigl(\langle a^{(1)}_i,x-b\rangle,\dots,
\langle a^{(k)}_i,x-b\rangle\bigr)$.  Ideals of a finite direct
sum decompose componentwise, and the $j$-th component of the ideal
generated by the $\Phi(g_i)$ is generated by the linear forms
$\langle a^{(j)}_i,x-b\rangle$, $1\le i\le\ell$ --- which, because
$M^{(j)}$ has row rank $q$, span all linear forms in $x-b$.  So each
component is $(x_1-b_1,\dots,x_q-b_q)$; pulling back, the ideal
$(F_M)$ contains $x_1-b_1,\dots,x_q-b_q$ together with $r$, and the reverse
containment is clear since each $g_i$ is a polynomial combination of
$x_1-b_1,\dots,x_q-b_q$.  Thus $(F_M)=(r,x-b)$.  The quotient
$\Q[x,s]/(r,x-b)\cong\Q[s]/(r)$ is a product of fields, so the ideal is
radical and its zero set is $\{(b,\alpha_j)\}$.  For simplicity of the
solutions: at $p_j$ we have $\nabla r(p_j)=(0,\dots,0,r'(\alpha_j))$
with $r'(\alpha_j)\neq0$, while
$\nabla g_i(p_j)=(a^{(j)}_i,\,0)$, because the $s$-derivative of $g_i$
is a combination of the terms
$\langle a^{(1)}_i,x-b\rangle,\dots,\langle a^{(k)}_i,x-b\rangle$, all of
which vanish at $x=b$.  The gradients $(a^{(j)}_i,0)$ span
$\Q^q\times\{0\}$ and the guard supplies the last direction: the full
Jacobian has rank $q+1=n$.

(b) Every $g_i$ vanishes identically on the line $x=b$, and the line
passes through each $p_j$.

(c) and (d).  At $p_j$ the selected Jacobian consists of the guard row
$(0,\dots,0,r'(\alpha_j))$ and the rows $(a^{(j)}_i,0)$ for $i\in A'$;
Laplace expansion along the guard row gives the displayed determinant
factorization, which is (i)$\Leftrightarrow$(vi).  If $A'$ is a common
basis, then in each component of the splitting the selected forms
already span all linear forms in $x-b$, so the argument of (a), applied
verbatim to the subsystem, gives $(F_A)=(r,x-b)=(F_M)$: that is (v),
and (v)$\Rightarrow$(iv)$\Rightarrow$(iii)$\Rightarrow$(ii) since the
common zero set is the finite set $S$ and every point of a finite set
is isolated.  If instead some $M^{(j)}_{A'}$ is singular, then on the
slice $s=\alpha_j$ the selected equations reduce to the guard (which
vanishes there) and the linear system
$(M^{(j)}_{A'})^{\!\top}(x-b)=0$, whose solution set through $b$ has
dimension $q-\rank M^{(j)}_{A'}\ge1$.  This is the affine subspace of
(d); it lies in $Z_A$, passes through $p_j$, and in one stroke destroys
isolation of $p_j$, finiteness, set faithfulness, and scheme
faithfulness, while the determinant factorization gives
$\det DF_A(p_j)=0$.  So each of (i)--(v) fails when (vi) fails, and the
equivalences are complete.
\end{proof}

The proposition cuts both ways: \emph{every} common-basis problem over
$\Q$ is realized.  And look again at \cref{fig:ws}: the shared
lines of the Wampler--Sommese example are exactly the objects that part
(d) produces, failure subspaces through genuine solutions.
The construction did not invent a new failure mode; it merely generalized the existing one.

\section{Three roots are enough}\label{sec:main}

Now we specialize.  An instance of \emph{three-dimensional matching}
is a list $T$ of triples $(t_1,t_2,t_3)\in\{1,\dots,q\}^3$, where
$\{1,\dots,q\}$ labels three separate ``parts''; a \emph{perfect
matching} is a set of $q$ triples that uses every label of every part
exactly once.  Deciding whether a perfect matching exists is one of the
original NP-complete problems \citep{Karp1972}; see
\citet{GareyJohnson1979}.  In matroid language, a set of $q$ triples is
a perfect matching exactly when it is a common basis of the three
\emph{partition} matroids of $T$ --- the matroids on ground set $T$ in
which a set of triples is independent when its first (respectively
second, third) coordinates are pairwise distinct.  Three-dimensional
matching is thus the common-basis problem one step beyond matroid
intersection, and it is exactly the problem the construction will realize
with $k=3$.

Call an instance $(q,T)$ \emph{standard} if $q\ge2$, the list $T$ is
duplicate-free, each of the $q$ labels appears in every coordinate of
some triple, and $|T|>q$.  (Every instance reduces to a standard one in
polynomial time, as the proof of \cref{thm:main} will show in three
sentences.)  For a standard instance, apply the construction to the three
partition matroids: slice values $\alpha=(1,2,3)$, base point
$b=\mathbf 1=(1,\dots,1)$, and slice matrices with the $\ell=|T|$ columns
$a^{(j)}_t=e_{t_j}$, so that
$\langle a^{(j)}_t,x-\mathbf 1\rangle=x_{t_j}-1$.  Explicitly, the
output system $F_T$ has the $|T|+1$ equations
\begin{equation}\label{eq:FT}
 r(s)=(s-1)(s-2)(s-3),
 \qquad
 g_t=L_1(s)\,(x_{t_1}-1)+L_2(s)\,(x_{t_2}-1)+L_3(s)\,(x_{t_3}-1)
\end{equation}
for $t\in T$, in the $n=q+1$ variables $(x_1,\dots,x_q,s)$, with
\[
 L_1(s)=\tfrac12(s-2)(s-3),\qquad
 L_2(s)=-(s-1)(s-3),\qquad
 L_3(s)=\tfrac12(s-1)(s-2).
\]
Membership of a system in this family is a syntactic check: read off
$T$, then verify the conditions on $(q,T)$.  Multiplying the $g_t$ by
$2$ makes every coefficient an integer of absolute value at most $8$
without changing any zero set, ideal, or Jacobian rank; the guard's
coefficients are integers of absolute value at most $11$.

Since the instance is standard, every label appears in every
coordinate of some triple, so the three slice matrices have full row
rank and \cref{prop:slices} applies.  Its part (c) now reads:
a selection is valid (in any of the five senses)
exactly when it contains the guard and its $q$ triples form a perfect
matching.  The main theorem is at this point a matter of assembling the
pieces.

\begin{theorem}\label{thm:main}
The family of systems $F_T$, over standard three-dimensional matching
instances $(q,T)$, is recognizable in polynomial time and has the
following properties.
\begin{enumerate}[label=(\roman*),itemsep=2pt]
\item Every equation has degree at most $3$; the ideal $(F_T)$ is
      radical; and the solution set consists of exactly three simple
      rational points, namely $(\mathbf 1,1)$, $(\mathbf 1,2)$,
      $(\mathbf 1,3)$.
\item For every selection $A$, the five properties of
      \cref{def:predicates} are equivalent.
\item Every invalid selection contains an explicit rational affine
      subspace of positive dimension inside $Z_A$ passing through one
      of the three solutions, computable from $A$ by linear algebra.
\item Deciding whether a valid selection exists is NP-complete; for
      arbitrary sparse rational input, \textup{\textsc{Strong
      Selection}} and \textup{\textsc{Weak Selection}} (and their
      listed variants) are NP-hard,
      already for systems with three simple rational solutions and
      degree at most $3$.
\end{enumerate}
\end{theorem}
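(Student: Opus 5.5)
The plan is to read everything off \cref{prop:slices} with $k=3$, $q\ge2$, $\ell=|T|$, $b=\mathbf 1$, $\alpha=(1,2,3)$ and $a^{(j)}_t=e_{t_j}$, and then add a reduction from three-dimensional matching. I will first check the hypotheses. The standardness condition that every label occurs in every coordinate makes each $M^{(j)}$ contain all of $e_1,\dots,e_q$ among its columns, so it has full row rank $q$. The condition $|T|>q$ gives $\ell>q$. Recognizability is syntactic: parse the $|T|+1$ polynomials, check that one is the guard and that the others have exactly the form \cref{eq:FT}, extract $T$, and verify that it is duplicate-free, covers every label in every coordinate, and satisfies $q\ge2$ and $|T|>q$. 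All of this is polynomial in the input size.

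Parts (i)--(iii) then follow directly. For (i), each $L_j$ has degree $2$ and multiplies a linear form, so the degree is at most $3$; radicality and the three simple solutions are \cref{prop:slices}(a). For (ii), take a selection $A$. If it omits the guard, \cref{prop:slices}(b) supplies the line $x=\mathbf 1$, which destroys all five properties at once: strong validity fails because weak validity fails. If it contains the guard, \cref{prop:slices}(c) gives the equivalence of (i)--(v). For (iii), an invalid selection either omits the guard, and then the line $\{(\mathbf 1,s)\}$ passes through all three solutions, or it contains the guard with some singular $M^{(j)}_{A'}$, and then the failure subspace of \cref{prop:slices}(d) serves. Here $M^{(j)}_{A'}$ is a $0/1$ matrix whose columns are $e_{t_j}$, so the kernel of its transpose is spanned by the $e_i$ for labels $i$ missed in coordinate $j$. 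That subspace is rational and computable by linear algebra.

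For (iv), membership in NP uses a selection $A$ as the certificate. By (ii) and \cref{prop:slices}(c), validity holds iff $A$ contains the guard and $A'$ is a common basis of the three partition matroids. A set of $q$ columns is a basis of the $j$-th partition matroid iff its $j$-th coordinates are distinct, that is, iff it is a permutation. So the check is that $A'$ is a perfect matching, which takes polynomial time. For hardness, I will reduce an arbitrary 3DM instance $(q,T)$ to a standard one; this is the main point needing care. First, delete duplicates, which do not affect the existence of a perfect matching. Second, if some label is missing in some coordinate, output a fixed standard no-instance. Third, if $|T|\le q$, either $|T|<q$ and we output the no-instance, or $|T|=q$ and we decide directly and output a fixed standard yes- or no-instance. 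For the edge case $q=1$, decide trivially. Under this map a valid selection exists iff $T$ has a perfect matching. The fixed instances can be small, for example $q=2$ with $T=\{(1,1,1),(2,2,2),(1,2,2)\}$ as a yes-instance, and a no-instance such as $T=\{(1,1,1),(1,2,2),(2,1,2),(2,2,1)\}$. For the no-instance I need to check the covering condition and that no pair forms a matching; these checks are routine.

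Finally, the system has polynomial size with small integer coefficients, and its three solutions can be listed explicitly. Every solution is nondegenerate by \cref{prop:slices}(a), so the promises of all four problems of \cref{def:problems} hold. It follows that \textsc{Strong Selection}, \textsc{Weak Selection} and their listed variants are NP-hard on this family.
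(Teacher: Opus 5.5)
Your proposal is correct and matches the paper's proof essentially step for step. Parts (i)--(iii) are read off from \cref{prop:slices}, with guard-omitting selections handled by part (b). NP membership is shown by checking that the chosen triples form a perfect matching. Hardness uses the same standardization: deduplicate, reject uncovered labels, decide $|T|\le q$ directly, and map to fixed standard yes/no instances.

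Your four-triple no-instance differs from the paper's $\{(1,1,1),(2,2,1),(2,1,2)\}$ but is equally valid. Your separate $q=1$ case is harmless, though already absorbed by the $|T|\le q$ step after deduplication, since $(1,1,1)$ is then the only possible triple.
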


\begin{proof}
Parts (i)--(iii) are \cref{prop:slices} applied to the standard
instance: the solutions are $(\mathbf 1,j)$ for $j=1,2,3$; the degrees
are at most $3$ ($L_j$ quadratic times a linear form); the failure
subspaces are the line $\{(\mathbf 1,s)\}$ when the guard is omitted
and $\bigl(\mathbf 1+\ker((M^{(j)}_{A'})^{\!\top})\bigr)\times\{j\}$
when a slice loses rank.

For (iv), membership in NP: a certificate is a selection consisting of
the guard and $q$ triples; the verifier checks that the three
coordinate multisets of the chosen triples are each exactly
$\{1,\dots,q\}$,
which by \cref{prop:slices}(c) is equivalent to validity in all five
senses, selections omitting the guard being invalid by
\cref{prop:slices}(b).  Hardness: we reduce from three-dimensional
matching, and an arbitrary instance may be assumed standard WLOG.  To see that, starting from an 
arbitrary instance, first merge
duplicate triples. An uncovered label rules out a perfect matching
outright.  When $|T|\le q$, the only conceivable matching is $T$
itself, checked by inspecting its three coordinate multisets.
Instances settled by these observations map to a fixed standard
instance with the same answer --- say
$\{(1,1,1),(2,2,2),(1,2,2)\}$ for YES and
$\{(1,1,1),(2,2,1),(2,1,2)\}$ for NO, both over $\{1,2\}$; the latter has
no matching because each of its three pairs repeats a label in some
coordinate.  Having standardized, now build $F_T$ (for the listed variants, include the list of 
its three solutions $(\mathbf 1,j)$) and observe that a valid selection exists
precisely when the instance has a perfect matching.  The system has
$O(|T|)$ equations, $O(1)$ monomials per equation, and coefficients of
absolute value at most $11$ so the whole reduction is polynomial.
Hardness for the recognizable family transfers to arbitrary sparse
input because the family's systems \emph{are} sparse rational systems.
\end{proof}

What, exactly, is happening at the three
solutions?  The three slices are three copies of the
same coordinate space; a triple row touches the three variables
$x_{t_1},x_{t_2},x_{t_3}$, one per slice; a selection of $q$ triple
rows is asked to be, at once, a basis at $s=1$, a basis at $s=2$, and a
basis at $s=3$.  That is all three-dimensional matching is.
\Cref{ex:ws} showed this mechanism in two variables: its three
shared lines are failure subspaces in the sense of
\cref{prop:slices}(d).

\section{Sharpness}\label{sec:sharpness}

Three questions of sharpness present themselves.  Is three roots
special, or an accident of the construction?  Does the difficulty keep
growing as the number of roots grows?  And is the NP upper bound as
obvious as it looks?  The answers are: special; no; and no.

For strong selection with explicitly listed nondegenerate solutions,
\cref{rem:smallk} handles one and two solutions in polynomial time,
and \cref{thm:main} proves hardness at three.  For larger fixed counts,
the construction extends the result without new ideas.

\begin{remark}\label{rem:higherk}
For every fixed $k\ge3$, the selection problem is NP-complete on a
recognizable family with \emph{exactly} $k$ simple rational
solutions and degree at most $k$.  Extend the standard instance by
$k-3$ further slices at $s=4,\dots,k$ whose slice matrices
have Vandermonde columns: the $i$-th triple row receives the column
$(1,i,\dots,i^{\,q-1})^{\!\top}$.  Any $q$ distinct Vandermonde columns form
a nonsingular matrix, so the new slices accept \emph{every} selection
as a basis and change nothing about which selections are valid, while
\cref{prop:slices} continues to apply.  The new entries
are bounded by $\ell^{\,q}$ and occupy $O(q\log\ell)$ bits, so the extended
system is still written down in polynomial size.  The difficulty is
unchanged; only the number of slices grows.
\end{remark}

The upper bound requires some care.  With the solutions given as a
list, one might try to verify a proposed selection by evaluating its
$k$ Jacobian determinants and checking that none is zero.  For the
families in this paper that works.  In general it does not, because
exact evaluation is not free.

\begin{remark}\label{rem:modular}
Consider $G=(x^N-y^N,\ x-2,\ y+2)$ with $N\ge2$ a large power of two.  The input
is written with about $\log_2N$ bits (the exponent is written in
binary), the single solution $(2,-2)$ is nondegenerate --- and the
determinant of one selected Jacobian at that solution is $N\,2^{N-1}$,
a number requiring $N$ bits.  The obvious verifier cannot even write the determinant down in
polynomial time.
\end{remark}

The cure is to verify modulo a small prime.

\begin{proposition}\label{prop:ceiling}
Suppose the input carries a list of rational points promised to be
exactly the solutions, each nondegenerate.  Then the existence of a strongly valid
selection --- \textup{\textsc{Listed Strong Selection}} --- is in NP, for
arbitrary overdetermined sparse rational systems.
\end{proposition}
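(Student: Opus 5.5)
The certificate will be a selection $A$ of size $n$ together with, for each listed point $p\in S$, a prime $\pi_p$ of polynomially many bits. The verifier will check that $\det DF_A(p)\not\equiv 0 \pmod{\pi_p}$. Since $\det DF_A(p)$ is a rational number, a nonzero residue modulo a prime not dividing any denominator proves the determinant is nonzero. So the verifier is sound; the work is in showing that such primes exist and that the residues can be computed in polynomial time even with binary-encoded exponents, which is the obstruction of \cref{rem:modular}.

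The verification goes as follows. First clear denominators: let $D$ be the product of all denominators appearing in the coefficients of $F$ and in the coordinates of the listed points. The verifier rejects if $\pi_p\mid D$, which it can test by reducing the polynomially many input denominators modulo $\pi_p$. Primality of $\pi_p$ is checked in polynomial time by AKS. Each entry of $DF_A(p)$ is a sum of monomials $c\,e\,p^{e-e_j}$. Modulo $\pi_p$, each coordinate of $p$ becomes an element of $\mathbb F_{\pi_p}$, the exponent $e$ is reduced modulo $\pi_p$, and the power is computed by repeated squaring in $O(\log e)$ multiplications. Each entry is therefore obtained in time polynomial in the input size and $\log \pi_p$. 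The $n\times n$ determinant over $\mathbb F_{\pi_p}$ then costs $O(n^3)$ field operations. The verifier accepts if all $k$ determinants are nonzero mod their primes, and by \cref{lem:commonbasis} this certifies that $A$ is strongly valid.

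The main obstacle is completeness: showing that a small prime exists when $A$ is strongly valid. Write $\det DF_A(p)=N_p/(\text{denominator})$ with integer $N_p\neq0$. As \cref{rem:modular} shows, $N_p$ can have exponentially many bits, but only exponentially many, not more. Bounding the size of each entry gives $\log|N_p|\le 2^{\mathrm{poly}(\text{input})}$, because exponents are at most $2^{\text{input}}$ and coordinate heights are polynomial. Hence $N_p\cdot D$ has at most $2^{\mathrm{poly}}$ distinct prime factors. Since the product of the first $t$ primes exceeds $2^t$, some prime among the first $2^{\mathrm{poly}}+1$ primes divides neither $N_p$ nor $D$. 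That prime has only polynomially many bits by the prime number theorem, so it can be written in the certificate. I would carry out this height estimate carefully: an entry of $DF_A(p)$ involves at most $\mathrm{size}(F)$ terms, each of height at most $e\cdot h(p)+h(c)+\log e$, and the determinant of an $n\times n$ matrix adds at most $n\log n$ plus $n$ times the entry height. This keeps everything at $2^{\mathrm{poly}}$ bits. No promise beyond the listed and nondegenerate hypotheses is needed, since validity only concerns the listed points.
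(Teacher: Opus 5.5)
Your proposal is correct and is essentially the paper's own argument. The shared steps are:
\begin{itemize}
\item a certificate consisting of the selection together with a polynomial-size prime, checked by AKS;
\item rejection of any prime dividing an input denominator;
\item evaluation of the Jacobian entries in $\mathbb F_\pi$ by repeated squaring despite binary exponents;
\item a $2^{\mathrm{poly}}$-bit height bound on the determinants, so that some small prime avoids all their prime divisors.
\end{itemize}

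The differences are cosmetic. The paper uses a single prime $\pi$ that avoids the product of all $k$ nonzero numerators, counting primes below $2^B$ via Chebyshev; you use one prime per listed point. Also note that only the differentiation multiplier $e_j$ may be reduced modulo $\pi_p$, not the exponent of the power itself; your $O(\log e)$ operation count already reflects this.
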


\begin{proof}
A certificate is a selection $A$ together with a prime $\pi$ of
polynomially bounded bit length.  We first bound the integers whose
prime divisors must be avoided.  Let $L$ be the input length.  Every
coefficient, coordinate of a listed solution, and exponent has at most
$L$ bits, and there are at most $L$ variables and monomials.  Raising
an $L$-bit rational number to an exponent below $2^L$ produces a
rational number whose numerator and denominator have $2^{O(L)}$ bits.
Products over the variables, sums over the monomials, and determinants
of size at most $L$ preserve this bound.  Thus every evaluated
Jacobian determinant has numerator and denominator of at most
$2^{cL}$ bits for an absolute constant $c$.

The verifier first tests that $\pi$ is prime, which can be done in
polynomial time \citep{AgrawalKayalSaxena2004}.  It rejects if any
denominator appearing in an input coefficient or listed coordinate is
zero modulo $\pi$.  It then evaluates every entry
$\partial f_i/\partial x_j(p)$ in $\mathbb F_\pi$ by repeated squaring
and computes the determinants by Gaussian elimination.  Binary
exponents require $O(L)$ modular multiplications per power, and all
remaining dimensions are bounded by the input length, so the
verification is polynomial.  The verifier accepts exactly when
$\det DF_A(p)$ is nonzero modulo $\pi$ for every listed solution $p$.

If a determinant vanishes over $\Q$, it vanishes modulo every
admissible prime, so the verifier is sound.  Conversely, suppose all
$k$ determinants are nonzero.  Let $D$ be the product of the
nonzero numerators of these determinants and all nonzero denominators
of the input coefficients and listed coordinates.  The denominator of
an evaluated determinant introduces no new prime beyond the latter
denominators.  The height bound above gives $D$ at most $2^{c'L}$ bits,
so $D$ has fewer than $2^{c'L}$ distinct prime divisors.  Chebyshev's
elementary estimate gives more than $2^B/(2B)$ primes below $2^B$ for
all sufficiently large $B$.  Taking $B=(c'+2)L$ therefore leaves an
admissible prime of $O(L)$ bits that divides none of the relevant
integers.  For that prime every determinant remains nonzero, completing
the NP certificate.
\end{proof}

The results for listed solutions can now be summarized simply:  \emph{For solutions given as an explicit
list of nondegenerate rational points, strong selection is polynomial
at one or two solutions (when degrees are polynomially bounded),
NP-complete at every fixed number at least three, and never worse than NP.}
Weak selection inherits every hardness
statement, because on the hard families the two notions coincide. Its
membership in NP is a different matter, to which we return in
\cref{sec:open}: it hangs on the isolation problem,
\cref{prob:pointisolation}.  And the ceiling leaves only two places for
hardness beyond NP to come from: with the solutions listed, the problem
cannot exceed NP, so a harder variant must change how the solutions
enter --- implicitly, through a polynomial-size system with
exponentially many solutions, or through singular solutions, where
isolation itself is in question (\cref{sec:open}).

\section{What cost cannot see}\label{sec:cost}

Return to \cref{ex:cost}.  The cheap pair was wrong, and no count or
polytope betrayed it.  One might still hope that this was bad luck ---
that with more care, cost data could steer the choice after all.  This
section closes that  question. We construct a variant of the hard family so that every
candidate that could possibly be valid has \emph{the same} cost, known
in advance, but the selection problem is NP-complete anyway.

First, conventions.  For polytopes $P_1,\dots,P_n\subset\R^n$, the
volume $\Vol_n(\lambda_1P_1+\dots+\lambda_nP_n)$ is a homogeneous
polynomial in $\lambda_1,\dots,\lambda_n\ge0$, and the \emph{mixed
volume} $\MV(P_1,\dots,P_n)$ is its coefficient of
$\lambda_1\lambda_2\cdots\lambda_n$; with this normalization
$\MV(\Delta_n,\dots,\Delta_n)=1$ for the standard simplex.  Bernstein's
theorem \citep{Bernshtein1975}, together with the theorems of
Kushnirenko and Khovanskii \citep{Kushnirenko1976,Khovanskii1978}, says that a square
system whose supports have mixed volume $M$ has at most $M$ isolated
solutions in the torus $(\C^\times)^n$, with equality for generic
coefficients (this is the count a polyhedral homotopy tracks
\citep{HuberSturmfels1995}).  The count comes with two warnings.  The mixed volume of the
supports changes when the coordinates are translated, so all values
below refer to the coordinates as displayed.  And it counts only torus
solutions: the system $(x,\,y)$ has mixed volume $0$ and a perfectly
good solution at the origin.  For that reason two affine variants are
in use, the origin-augmented mixed volume $\MVp$, which bounds isolated
solutions in all of $\C^n$ \citep{LiWang1996,Rojas1994}, and the stable
mixed volume of \citet{HuberSturmfels1997}, which satisfies
$\MV\le\SMV\le\MVp$.  The theorem below pins all three at once, so no
choice among them matters.

Now the variant.  For a standard instance $(q,T)$, let
\[
 U(x)=\sum_{i=1}^{q}(x_i-1),
 \qquad
 \tilde g_t \;=\; g_t \;+\; 10\,r(s)\,U(x)\quad(t\in T),
 \qquad
 \tilde F_T=(r,\ \tilde g_t: t\in T).
\]
The degree rises to four.  Nothing else changes:

\begin{lemma}\label{lem:congruence}
For every standard $(q,T)$:
\begin{enumerate}[label=(\alph*),itemsep=1pt]
\item $\tilde g_t\equiv g_t \pmod{(r)}$; hence
      $(\tilde F_T)=(F_T)$, the solution set is again the three points
      $(\mathbf 1,j)$ with radical ideal, and for every selection
      $A\ni r$ the selected ideals agree.
\item $\nabla\tilde g_t=\nabla g_t$ at every solution, since
      $\nabla(rU)=U\nabla r+r\nabla U$ and both $r$ and $U$ vanish
      there.
\item Selections omitting the guard still contain the line
      $\{(\mathbf 1,s)\}$, since $g_t$ and $U$ both vanish on it.
\item Consequently the entire classification of
      \cref{prop:slices}\textup{(b)--(d)} holds verbatim for
      $\tilde F_T$: valid means guard plus perfect matching, with the
      same failure subspaces.
\end{enumerate}
\end{lemma}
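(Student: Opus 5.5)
The plan is to check the four parts in order; (d) then follows by rerunning the proof of \cref{prop:slices} with the ingredients supplied by (a)--(c).

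For (a), the congruence is immediate because $\tilde g_t-g_t=10\,r\,U$ lies in $(r)$. Since $r$ is itself an equation of both systems, the ideals agree: $(\tilde F_T)=(r,\tilde g_t)=(r,g_t)=(F_T)$. The same argument applies verbatim to any selection $A\ni r$: $(\tilde F_A)=(r,\tilde g_t:t\in A')=(r,g_t:t\in A')=(F_A)$. Radicality and the solution set $\{(\mathbf 1,j)\}$ are then inherited from \cref{prop:slices}(a), applied as in \cref{thm:main}(i).

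For (b), the product rule gives $\nabla(rU)=U\nabla r+r\nabla U$. At each $(\mathbf 1,j)$ we have $r(j)=0$ and $U(\mathbf 1)=0$, so the correction term has zero gradient. Hence $D\tilde F_A(p_j)=DF_A(p_j)$ for every selection, including selections without the guard.

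For (c), on the line $x=\mathbf 1$ every $g_t$ vanishes, as in \cref{prop:slices}(b), and $U$ vanishes as well. So each $\tilde g_t$ vanishes identically on the line, and any guard-free selection contains it.

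For (d), there are two kinds of selection to handle.

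\textbf{Selections omitting the guard.} By (c) these contain the line $\{(\mathbf 1,s)\}$, which passes through all three solutions. So none of them is weakly valid, hence none is valid in any of the five senses.

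\textbf{Selections containing the guard.} By (a) the ideal $(\tilde F_A)$ equals $(F_A)$, so the zero sets agree: $\tilde Z_A=Z_A$. By (b) the Jacobians at the solutions also agree. Each of the five predicates of \cref{def:predicates} depends only on these data (the ideal, the zero set, and the Jacobian at $S$), together with $S$, which is unchanged. Therefore the equivalences in \cref{prop:slices}(c) transfer unchanged. The failure subspaces of \cref{prop:slices}(d) lie in $Z_A=\tilde Z_A$, so they transfer as well.

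I do not expect a real obstacle. The one point needing care is that the ideal equality requires the guard to be present in the selection; this is exactly why the guard-free case is handled separately through (c) rather than through (a).
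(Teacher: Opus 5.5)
Your proof is correct and follows the paper's own argument. The congruence $\tilde g_t-g_t=10\,rU\in(r)$ gives equality of the full ideals and of the selected ideals whenever the guard is selected. The product rule gives equal gradients at the solutions. Part (d) then transfers because ideals, zero sets, and Jacobians at $S$ all agree with those of $F_T$. Your separate treatment of guard-free selections through (c) matches the paper, which handles that case the same way, only more tersely.
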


\begin{proof}
(a) $\tilde g_t-g_t=10rU\in(r)$, and generating sets that differ by
multiples of a common generator generate the same ideal.  (b) and (c)
are the displayed observations; (d) follows since ideals, zero sets,
and Jacobians at the solutions all agree with those of $F_T$.
\end{proof}

What the modification buys is uniformity of the Newton polytopes.

\begin{lemma}\label{lem:support}
For every $t\in T$, the support of $\tilde g_t$ is
\[
 \mathcal A_q=\{k\,e_s:0\le k\le3\}\;\cup\;
 \{e_i+k\,e_s:\ 1\le i\le q,\ 0\le k\le3\},
\]
independently of $t$, while the guard has support
$\mathcal B=\{k\,e_s:0\le k\le3\}$.  Hence
$\conv(\mathcal A_q)=\Delta_q+[0,3e_s]$, a prism over the standard
simplex, $\conv(\mathcal B)=[0,3e_s]$, and both supports contain the
origin.
\end{lemma}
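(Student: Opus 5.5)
The plan is to expand $\tilde g_t=g_t+10\,r(s)\,U(x)$ monomial by monomial. The factor $10$ is there so that the perturbation term dominates $g_t$ coefficientwise: every monomial of $10rU$ survives, and $g_t$ adds no new ones. The guard's support is immediate from $r(s)=s^3-6s^2+11s-6$, whose four coefficients $1,-6,11,-6$ are all nonzero, so $\supp r=\mathcal B$.

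\emph{Step 1 (the perturbation term).} Write $U(x)=\sum_i x_i-q$. Then
\[
 10\,r\,U=\sum_{k=0}^{3}10\,r_k\,s^k\sum_{i=1}^q x_i\;-\;10q\sum_{k=0}^3 r_k\,s^k ,
\]
where $r_k$ is the coefficient of $s^k$ in $r$. Since every $r_k\neq0$, the support of $10rU$ is exactly $\mathcal A_q$. The coefficients of $x_is^k$ have absolute value $10|r_k|\in\{60,110,60,10\}$, and the coefficients of $s^k$ are $-10q\,r_k$.

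\emph{Step 2 (the row $g_t$ cannot cancel anything).} The row $g_t$ only involves monomials $x_{i}s^k$ and $s^k$ with $k\le2$. In fact $g_t$ contains no term $x_is^3$, so the $x_is^3$ coefficients of $\tilde g_t$ are just $10r_3=10\neq0$. We bound the contribution of $g_t$ using the explicit forms
\[
 L_1=\tfrac12(s^2-5s+6),\qquad L_2=-(s^2-4s+3),\qquad L_3=\tfrac12(s^2-3s+2).
\]
\begin{itemize}
\item For the $x_is^k$ coefficients: a label $i$ may occur in up to three coordinates of $t$, so the contribution of $g_t$ is a sum over $j$ with $t_j=i$ of the $s^k$-coefficients of the $L_j$. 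In absolute value this is at most $3+3+1=7$ for $k=0$, $\tfrac52+4+\tfrac32=8$ for $k=1$, and $2$ for $k=2$. Each of these is strictly below the corresponding $10|r_k|$, so every $x_is^k$ coefficient of $\tilde g_t$ stays nonzero.
\item For the pure powers: the $s$-only part of $g_t$ is $-\sum_jL_j(s)=-1$, because the Lagrange basis polynomials sum to $1$. Hence the constant coefficient of $\tilde g_t$ is $-10q(-6)-1=60q-1\neq0$, and the coefficients of $s,s^2,s^3$ are $-10q\,r_k\neq0$ unchanged.
\end{itemize}
Together with Step 1, this shows that $\supp\tilde g_t=\mathcal A_q$ exactly, independently of $t$.

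\emph{Step 3 (convex hulls).} The set $\mathcal A_q$ is the Minkowski sum of the lattice points $\{0,e_1,\dots,e_q\}$ and $\{0,e_s,2e_s,3e_s\}$. Its convex hull is therefore $\conv\{0,e_1,\dots,e_q\}+[0,3e_s]=\Delta_q+[0,3e_s]$, since the convex hull of a Minkowski sum is the Minkowski sum of the convex hulls. Similarly $\conv\mathcal B=[0,3e_s]$. Both supports contain the origin because the constant terms $60q-1$ and $-6$ are nonzero.

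The only real obstacle is Step 2, the non-cancellation check. It reduces to the small coefficient bounds above, and these are exactly what the factor $10$ was chosen to beat.
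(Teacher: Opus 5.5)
Your proposal is correct and follows essentially the same route as the paper's appendix. Both arguments are a monomial-by-monomial domination check: $10\,rU$ has support exactly $\mathcal A_q$, and the coefficients contributed by $g_t$ (at most $7,8,2,0$ in absolute value for $k=0,1,2,3$, against $60,110,60,10$) cannot cancel it; the partition of unity $\sum_j L_j=1$ then settles the pure powers of $s$. The differences are cosmetic: the paper works with the integer scaling $2\tilde g_t$ and checks the prism through its vertices, whereas you work with $\tilde g_t$ directly and use that the convex hull of a Minkowski sum of point sets is the Minkowski sum of their hulls.
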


\begin{proof}
\Cref{app:support}.  The point is that the coefficient $10$ makes the
contribution of $10\,rU$ strictly dominate, monomial by monomial, every
coefficient that $g_t$ can produce --- including when a triple repeats a
label --- so no cancellation occurs anywhere, and every row's support
fills out to the same set $\mathcal A_q$.
\end{proof}

\begin{proposition}\label{prop:mvvalues}
For every $q\ge1$,
\[
 \MV\bigl(\conv\mathcal B,\ \conv\mathcal A_q,\dots,\conv\mathcal
 A_q\bigr)=3
 \qquad\text{and}\qquad
 \MV\bigl(\conv\mathcal A_q,\dots,\conv\mathcal A_q\bigr)=3(q+1),
\]
with one guard and $q$ modified rows in the first case and $q+1$
modified rows in the second.  In both cases the ordinary,
origin-augmented, and stable mixed volumes coincide.
\end{proposition}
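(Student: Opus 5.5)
The plan is to compute both mixed volumes directly from the definition, using the product structure recorded in \cref{lem:support}. Write $I=[0,3e_s]$ for $\conv\mathcal B$ and $P=\conv\mathcal A_q=\Delta_q+I$, where $\Delta_q$ is the standard simplex in the $x$-coordinates. The key observation is that Minkowski sums of these polytopes remain prisms with orthogonal factors. Indeed, for $\lambda_0,\lambda_1,\dots,\lambda_q\ge0$ and $\Lambda=\lambda_1+\dots+\lambda_q$,
\[
 \lambda_0 I+\lambda_1P+\dots+\lambda_qP=\Lambda\,\Delta_q+(\lambda_0+\Lambda)\,I .
\]
This is the product of the scaled simplex $\Lambda\Delta_q\subset\R^q$ (the $x$-coordinates) and a segment of length $3(\lambda_0+\Lambda)$ in the $s$-direction. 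Its $(q+1)$-volume is therefore
\[
 \frac{\Lambda^q}{q!}\cdot 3(\lambda_0+\Lambda).
\]

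For the first value I will extract the coefficient of $\lambda_0\lambda_1\cdots\lambda_q$. Only the summand $3\lambda_0\Lambda^q/q!$ contributes. The coefficient of $\lambda_1\cdots\lambda_q$ in $\Lambda^q$ is the multinomial $q!$, so this gives $3$.

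For the second value all $q+1$ rows use $P$. The volume of $\sum_{i=1}^{q+1}\lambda_iP=\Lambda'\Delta_q+\Lambda' I$, with $\Lambda'=\sum_{i=1}^{q+1}\lambda_i$, equals $3\Lambda'^{\,q+1}/q!$. The coefficient of $\lambda_1\cdots\lambda_{q+1}$ is then $3(q+1)!/q!=3(q+1)$. I will also check the normalization: the same computation gives $\MV(\Delta_n,\dots,\Delta_n)=1$, as the paper's convention requires.

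It remains to show that $\MV$, $\SMV$ and $\MVp$ agree. The origin-augmented mixed volume replaces each support $\mathcal S$ by $\mathcal S\cup\{0\}$. By \cref{lem:support}, both $\mathcal A_q$ and $\mathcal B$ already contain the origin, so the polytopes are unchanged and $\MVp=\MV$. The sandwich $\MV\le\SMV\le\MVp$ quoted earlier then forces $\SMV$ to equal the same number.

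I see no real obstacle here. The only point needing care is the Minkowski-sum identity. It requires that $I$ lies in the $s$-axis while $\Delta_q$ lies in the $x$-coordinate subspace, so that the sum is an honest Cartesian product and its volume factors. I will state this explicitly, noting that a sum of $\Delta_q$-translates and $I$-translates is the product of the sums in each factor.
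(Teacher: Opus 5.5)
Your proposal is correct and follows the paper's proof essentially verbatim. It uses the same prism identity $\lambda_0 I+\sum_i\lambda_iP=\Lambda\Delta_q+(\lambda_0+\Lambda)I$ and the same coefficient extraction giving $3$ and $3(q+1)$; it also gets $\MVp=\MV$ from the origin lying in both supports, with the Huber--Sturmfels sandwich $\MV\le\SMV\le\MVp$ pinning the stable value.
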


\begin{proof}
Write $P=\Delta_q+[0,3e_s]$ and $Q=[0,3e_s]$.  Minkowski sums of these
bodies are again prisms:
\[
 \lambda_0Q+\sum_{i=1}^{q}\lambda_iP
 \;=\;
 \Bigl(\sum_i \lambda_i\Bigr)\Delta_q
 +\Bigl[0,\ 3\Bigl(\lambda_0+\sum_i \lambda_i\Bigr)e_s\Bigr],
\]
with volume $\bigl(\sum_i \lambda_i\bigr)^{q}/q!\ \cdot\
3\bigl(\lambda_0+\sum_i \lambda_i\bigr)$.  The coefficient of
$\lambda_0\lambda_1\cdots\lambda_q$ is $3\cdot q!/q!=3$.  With $q+1$
copies of $P$ the volume is $3\bigl(\sum\lambda\bigr)^{q+1}/q!$, whose
$\lambda_1\cdots\lambda_{q+1}$-coefficient is $3(q+1)!/q!=3(q+1)$.  Since both supports contain the origin,
augmenting by the origin changes no polytope, so $\MVp=\MV$; and
$\MV\le\SMV\le\MVp$ squeezes the stable value
\citep{HuberSturmfels1997}.
\end{proof}

\begin{theorem}\label{thm:constantmv}
On the family $\tilde F_T$, over standard instances (recognizable in
polynomial time, degree at most $4$, radical ideal, three simple
rational solutions, all in the torus):
\begin{enumerate}[label=(\roman*),itemsep=2pt]
\item the five properties of \cref{def:predicates} coincide for every
      selection, with the same explicit failure subspaces as before;
\item every selection containing the guard has ordinary $=$ augmented
      $=$ stable mixed volume $3$; every selection omitting the guard
      is invalid and has all three equal to $3(q+1)$;
\item every valid selection $A$ satisfies $Z_A=S$: the subsystem
      attains its Bernstein bound with the three genuine solutions and
      produces no extra solutions to filter;
\item deciding whether a valid selection of mixed volume at most $3$
      exists (in any of the three variants) is NP-complete; for
      arbitrary sparse input it is NP-hard.
\end{enumerate}
\end{theorem}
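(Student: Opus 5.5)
The plan is to assemble the theorem from \cref{lem:congruence}, \cref{lem:support}, \cref{prop:mvvalues} and the proof of \cref{thm:main}, in that order.

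\emph{Preliminaries and part (i).} Recognizability is a syntactic check, as for $F_T$: read off $T$ from the rows $\tilde g_t$ and verify standardness. The degree bound $4$ holds because $rU$ has degree $4$ and $g_t$ has degree at most $3$. Radicality and the solution set $\{(\mathbf 1,j)\}$ come from \cref{lem:congruence}(a) together with \cref{thm:main}(i). All three solutions have nonzero coordinates, so they lie in the torus. Part (i) is then \cref{lem:congruence}(d) combined with \cref{prop:slices}(c),(d): ideals of guard-containing selections agree with those for $F_T$, and Jacobians at the solutions agree by \cref{lem:congruence}(b). For guard-free selections, \cref{lem:congruence}(c) supplies the line, so all five properties fail simultaneously.

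\emph{Part (ii).} Mixed volume depends only on the Newton polytopes. By \cref{lem:support}, a selection containing the guard has polytopes $(\conv\mathcal B,\conv\mathcal A_q,\dots,\conv\mathcal A_q)$. A selection omitting it has $q+1$ copies of $\conv\mathcal A_q$. The values $3$ and $3(q+1)$, and the coincidence of the three mixed-volume variants, are exactly \cref{prop:mvvalues}.

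\emph{Part (iii).} A valid $A$ contains the guard, and by (i) it is set faithful, so $Z_A=S$ has exactly three points, all in the torus and all isolated. Since the mixed volume is $3$, Bernstein's bound is attained and nothing remains to filter.

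\emph{Part (iv).} By (ii), the constraint ``mixed volume at most $3$'' is exactly the constraint ``contains the guard'' (using $3(q+1)>3$). Since every valid selection contains the guard, the question is equivalent to existence of a valid selection. Membership in NP uses the same certificate as in \cref{thm:main}: the guard plus $q$ triples, checked by coordinate multisets. The verifier need not compute any mixed volume, because (ii) gives it for free. Hardness repeats the standardization reduction from \cref{thm:main}, now outputting $\tilde F_T$. This is still polynomial size: each $\tilde g_t$ has $O(q)$ monomials and coefficients bounded by a constant. Hardness transfers to arbitrary sparse input as before.

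The only substantive input is \cref{lem:support}, which is assumed. The main thing to watch is that the reduction's fixed YES/NO instances have $q=2$, and that the polynomial size bound now accounts for $O(q)$ monomials per row rather than $O(1)$.
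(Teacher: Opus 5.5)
Your proposal is correct and follows the paper's proof essentially step for step: (i) from \cref{lem:congruence}(d), (ii) from \cref{lem:support} and \cref{prop:mvvalues}, (iii) from set faithfulness plus the mixed-volume value $3$, and (iv) from the certificate and reduction of \cref{thm:main}, where the cost condition comes for free by (ii). One small slip is that the coefficients of $\tilde g_t$ are not bounded by a constant, since the pure powers of $s$ carry coefficients linear in $q$ (up to $220q+2$ after scaling by $2$, see \cref{app:support}); they still have $O(\log q)$ bits, so the encoding remains polynomial and nothing in your argument changes.
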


\begin{proof}
(i) is \cref{lem:congruence}(d).  (ii) is
\cref{lem:support,prop:mvvalues}: a selection containing the guard has
support tuple $(\mathcal B,\mathcal A_q,\dots,\mathcal A_q)$ and one
omitting it has $(\mathcal A_q,\dots,\mathcal A_q)$ --- by uniformity,
\emph{which} triples were selected is invisible to the polytopes.
(iii): a valid selection is scheme faithful by (i), so $Z_A=S$
consists of the three simple torus solutions; by (ii) the mixed volume
is $3$, so the Bernstein bound is attained, and set faithfulness rules
out solutions anywhere else in $\C^n$.  (iv) Membership: certify with a
guard-plus-matching selection as in \cref{thm:main}; the cost condition
holds automatically by (ii) and requires no computation.  Hardness: the
reduction of \cref{thm:main} composed with the modification --- a
perfect matching yields a valid selection of cost exactly $3$; no
matching, no valid selection at any cost.  Encoding sizes stay
polynomial (\cref{app:support}).
\end{proof}

The theorem separates the difficulty of deciding whether a valid
subsystem exists from the difficulty of \emph{costing} a candidate.
Computing mixed volumes exactly is itself hard in general (\#P-hard,
by \citet{DyerGritzmannHufnagel1998}), and one might have suspected
any hardness of cost-aware selection to be an artifact of that.  Here
the opposite happens: on this family the cost of every candidate is
known before the search begins, the objective is constant across the
entire class of potentially valid selections, and an exact
mixed-volume oracle would help not at all.  All the difficulty is in
the geometry of keeping roots isolated.  For the solve-then-filter
workflow, a valid selection tracks three paths, ends at the three true
solutions, certifies them by nonsingularity, and discards nothing.

One caution.  The theorem does \emph{not} say that minimizing mixed
volume over valid selections is hard because the objective is hard.
It says the minimization is NP-hard already when the objective is
trivial.  On families where the cost genuinely varies, the
optimization problem is unclassified even in the two-solution regime,
where feasibility is polynomial ---
see \cref{prob:minmv}.

\section{Open problems}\label{sec:open}

\Cref{tab:map} summarizes the classification proved here.  For strong
selection with explicitly listed nondegenerate rational solutions, the
problem is polynomial with one or two solutions when degrees are
polynomially bounded, NP-complete for every fixed number from three
onward, and in NP for a list of arbitrary
length.  Weak selection inherits the lower bounds because the notions
coincide on the hard families, but its general upper bound remains tied
to the local-isolation problem below.

The ceiling leaves two routes to hardness beyond NP
(\cref{sec:sharpness}).  The implicit route, where the solution set is
described by equations rather than listed, is not pursued here.  The
singular route is \cref{prob:singular} below.

A word of scale, finally: these are worst-case theorems.
Conditioning-based and adaptive selection heuristics
\citep{HauensteinRegan2018,Duff2021} may perform excellently on the
structured systems encountered in practice. What the theorems rule out,
unless P equals NP, is a method that is fast and correct on
\emph{every} instance.  They also say nothing about rewriting the
system: liftings that introduce auxiliary variables can lower the
mixed volume \citep{BorgerEtAl2025}, whereas selection takes the
equations as given.

One question sits underneath the classification rather than beyond it,
and we state it first.  The three problems after it are the more
plausible next steps.

\begin{problem}\label{prob:pointisolation}
Given a \emph{square} system $G\in\Q[x_1,\dots,x_n]^n$ and a rational
point $z$ with $G(z)=0$, what is the complexity of deciding whether $z$
is isolated in $V_{\C}(G)$?  This is the local question underlying weak
validity: every original solution must remain isolated in the selected
subsystem.  Related work includes \citet{Koiran2000LocalDimensions} on
local dimensions of constructible sets and the numerical local-dimension
test of \citet{BatesEtAl2009LocalDimension}.
\end{problem}

\begin{problem}\label{prob:singular}
Can hardness beyond NP be achieved with a \emph{constant} number of
solutions?  With the list of nondegenerate rational solutions supplied,
\cref{prop:ceiling} confines any such hardness to weak selection,
through singular local geometry of the selected subsystems, where weak
and strong validity part company and Jacobian rank no longer decides
isolation.
\end{problem}

\begin{problem}\label{prob:minmv}
On families where the cost varies and valid selections are known or can
be found efficiently, what is the complexity of minimizing mixed volume
over valid selections?  What polynomial-time approximation guarantees
are possible?  A valid selection with smaller mixed volume can reduce the
number of paths tracked by a polyhedral homotopy solver.  Related work
includes weighted matroid intersection for additive costs \citep{Frank1981}
and approximation of the mixed volume of a given tuple of convex bodies
\citep{Gurvits2009}.
\end{problem}

\begin{problem}\label{prob:structured}
For \textsc{Listed Strong Selection} with polynomially bounded degrees,
find structural classes of systems for
which valid selection is polynomial beyond $k\le2$ and a fixed
number $m-n$ of extra equations.  Examples could include support conditions, coefficient
conditions, or shared decompositions of the Jacobian matroids.  Parametric families are the natural place to look and a
selection certified once, offline, may be reusable across the entire
family; the prolonged systems of parameter estimation are exactly of
this kind, their structure fixed by the model while the data supply
only coefficients \citep{BassikEtAl2026}.
\end{problem}

Squaring up will remain a daily act in numerical algebraic geometry,
and randomization will remain safe.  What the theorems here add is a
boundary.  If one insists on keeping the equations as given, for example, in an effort to preserve sparsity, then the choice
of which to keep is a hard combinatorial problem already for systems
with three simple rational roots and radical ideal.  Neither cost
information nor a different choice among the natural correctness
criteria removes the obstruction on the constructed families.
\Cref{prob:minmv,prob:structured} ask which additional structures
restore tractability.

\appendix

\section{Supports and volumes}\label{app:support}

We prove \cref{lem:support}.  Work with the integer scaling
$2\tilde g_t=2g_t+20\,r(s)\,U(x)$; scaling a row changes no support.
The ingredients, fully expanded:
\[
 2L_1=s^2-5s+6,\qquad
 2L_2=-2s^2+8s-6,\qquad
 2L_3=s^2-3s+2,\qquad
 r=s^3-6s^2+11s-6,
\]
and, as a sanity check, $2L_1+2L_2+2L_3=2$, the partition of unity.
Write $\rho_3,\rho_2,\rho_1,\rho_0=1,-6,11,-6$ for the coefficients of
$r$.

Every monomial of $2\tilde g_t$ has $x$-degree at most $1$ and
$s$-degree at most $3$, so its support is contained in $\mathcal A_q$.
We show every monomial of $\mathcal A_q$ actually occurs.

\emph{Monomials $x_i s^k$.}  The term $20\,r\,U$ contributes
$20\rho_k$ to $x_i s^k$ for \emph{every} $1\le i\le q$ and $0\le k\le3$:
the values are $(20,-120,220,-120)$ for $k=(3,2,1,0)$.  The term
$2g_t$ contributes only when $i$ is a label of the triple $t$, and its
contribution is a sum of at most three $s^k$-coefficients drawn from
the $2L_j$ above, hence bounded in absolute value by $0$, $4$, $16$,
$14$ for $k=3,2,1,0$.  Since $20>0$, $120>4$, $220>16$, and $120>14$,
no cancellation is possible: every coefficient of $x_i s^k$ is
nonzero, and lies in $[-134,236]$.

\emph{Monomials $s^k$.}  The pure-$s$ part of $2g_t$ is
$-(2L_1+2L_2+2L_3)=-2$, a constant, while the pure-$s$ part of
$20\,r\,U$ is $-20q\,r(s)$, with coefficients
$(-20q,120q,-220q,120q)$; adding $-2$ at $k=0$ gives $120q-2\neq0$.
All four powers occur.

So $\supp(2\tilde g_t)=\mathcal A_q$ for every triple, repeated labels
included, and $\supp(r)=\mathcal B$ is immediate.  The convex hulls are
the prism and the segment: the prism's vertices
$\{(v,0),(v,3e_s):v\in\{0,e_1,\dots,e_q\}\}$ all lie in
$\mathcal A_q$, and $\mathcal A_q$ lies in the prism.

Finally, the normalization of the mixed volume can be checked against
the simplest case.  Our convention takes $\MV$ to be the coefficient
of $\lambda_1\cdots\lambda_n$ in
$\Vol_n(\lambda_1P_1+\dots+\lambda_nP_n)$; for
$P_i=\Delta_n$ this gives
$\Vol_n\bigl((\sum_i\lambda_i)\Delta_n\bigr)=(\sum_i\lambda_i)^n/n!$,
with $\lambda_1\cdots\lambda_n$-coefficient $n!/n!=1$, matching the one torus solution
of $n$ generic linear equations.  The $q=1$ case of
\cref{prop:mvvalues} can be checked by hand: the prism is then a
$1\times3$ rectangle, the guard a vertical segment of length $3$, and
$\Vol(\lambda_0Q+\lambda_1P)=\lambda_1\cdot3(\lambda_0+\lambda_1)$,
with $\lambda_0\lambda_1$-coefficient $3$.

\emph{Encoding.}  After scaling by $2$, each triple row of $F_T$ is
\[
 2g_t=(s^2-5s+6)(x_{t_1}-1)-(2s^2-8s+6)(x_{t_2}-1)+(s^2-3s+2)(x_{t_3}-1),
\]
with integer coefficients of absolute value at most $8$ after
collection and at most $10$ monomials; the guard is
$r=s^3-6s^2+11s-6$.  Using sparse lists of nonzero variable--exponent pairs
within monomials, the whole system takes $O(|T|)$ numbers of $O(1)$
bits plus indices of $O(\log q)$ bits.  For the modified system, each
row $2\tilde g_t=2g_t+20\,r\,U$ has exactly $4(q+1)$ monomials with
integer coefficients of absolute value at most $220q+2$ --- the four
pure powers of $s$ grow linearly in $q$, everything else stays in
$[-134,236]$, as computed above --- for a total encoding of
$O(|T|\,q)$ numbers of $O(\log q)$ bits.  Thus both families have
polynomial encoding length.

\ifx\paperacknowledgments\empty\else
\section*{Acknowledgments}
\paperacknowledgments
\fi

\section*{Declarations}

\emph{Code availability.}
The SymPy script \texttt{verify-three-root-family.py}, which checks
the explicit constructions and examples of this paper on small
instances, accompanies the manuscript together with its
reproducibility guide \texttt{VERIFICATION\_README.md}.

\emph{Use of artificial intelligence.}
OpenAI Codex and Claude Fable were used for mathematical and editorial
review, literature cross-checking, LaTeX preparation and diagnostics,
and code-execution assistance.  The author assumes responsibility for
all content.

\bibliographystyle{plainnat}
\bibliography{references}

\end{document}